\definecolor{codegreen}{rgb}{0,0.6,0}
\definecolor{codegray}{rgb}{0.5,0.5,0.5}
\definecolor{codepurple}{rgb}{0.58,0,0.82}
\definecolor{backcolour}{rgb}{0.95,0.95,0.92}
\lstdefinestyle{mystyle}{
  backgroundcolor=\color{backcolour},   commentstyle=\color{codegreen},
  keywordstyle=\color{red},
  numberstyle=\tiny\color{codegray},
  stringstyle=\color{codegreen},
  basicstyle=\footnotesize,
  breakatwhitespace=false,         
  breaklines=true,                 
  captionpos=b,                    
  keepspaces=true,                 
  numbers=left,                    
  numbersep=5pt,                  
  showspaces=false,                
  showstringspaces=false,
  showtabs=false,                  
  tabsize=2
}
\definecolor{webgreen}{rgb}{0,.5,0}
\definecolor{webbrown}{rgb}{.6,0,0}
\newcommand{\seqnum}[1]{\href{http://oeis.org/#1}{\underline{#1}}}
\newcommand{\zp}{\mathbb{Z}/p\mathbb{Z}}
\newcommand\numberthis{\addtocounter{equation}{1}\tag{\theequation}}
\DeclareMathOperator{\Id}{Id}
\DeclareMathOperator{\Ch}{Ch}
\begin{document}

\begin{center}
% \epsfxsize=4in
% \leavevmode\epsffile{logo129.eps}
\end{center}

\theoremstyle{plain}
\newtheorem{theorem}{Theorem}
\newtheorem{corollary}[theorem]{Corollary}
\newtheorem{lemma}[theorem]{Lemma}
\newtheorem{proposition}[theorem]{Proposition}

\theoremstyle{definition}
\newtheorem{definition}[theorem]{Definition}
\newtheorem{example}[theorem]{Example}
\newtheorem{conjecture}[theorem]{Conjecture}
\newtheorem{problem}{Problem}

\theoremstyle{remark}
\newtheorem{remark}[theorem]{Remark}

\begin{center}
\vskip 1cm{\LARGE\bf 401 and Beyond:  Improved Bounds and Algorithms for the Ramsey Algebra Search}
\vskip 1cm
Jeremy F.~Alm\footnote{Current affiliation: Lamar University, Beaumont, TX, 77710}\\
Department of Mathematics\\
 Illinois College\\
 Jacksonville, IL 62650\\
USA \\
\href{mailto:alm.academic@gmail.com}{\tt alm.academic@gmail.com} \\
\end{center}

\begin{abstract}
In this paper, we discuss an improvement of an algorithm to search for primes $p$ and coset-partitions of $(\mathbb{Z}/p\mathbb{Z})^\times$ that yield Ramsey algebras over $\mathbb{Z}/p\mathbb{Z}$.  We also prove an upper bound on the modulus $p$ in terms of the number of cosets. We have, as a corollary,  that there is no prime $p$ for which there exists a partition of $(\mathbb{Z}/p\mathbb{Z})^\times$ into 13 cosets that yields a 13-color Ramsey algebra.  
\end{abstract} 

\section{Introduction}

In this paper, we continue the project begun in Comer's paper \cite{comer83} and  continued in two recent papers \cite{AlmManske2015,Kowalski} of constructing Ramsey algebras over $\zp$ using multiplicative cosets. A \emph{Ramsey algebra in $m$ colors} is a partition of a set $U\times U$ into disjoint binary relations $\Id, A_0,\ldots,A_{m-1}$ such that
\begin{enumerate}
     \item[(I.)] $A^{-1}_i=A_i$;
  \item[(II.)] $A_i\circ A_i=A^c_i$;
  \item[(III.)] for $i\neq j$, $A_i\circ A_j=\Id^c$.
\end{enumerate}
Here, $\Id=\{ (x,x) : x\in U \}$ is the identity over $U$, $\circ$ is relational composition, $^{-1}$ is relational inverse,  and $^c$ is complementation with respect to $U\times U$. 

Ramsey algebras are representations of relation algebras first defined in Maddux's paper  \cite{Mad82} (but not named). With the single exception of an alternate construction  of the 3-color algebra using $(\mathbb{Z}/4\mathbb{Z})^2$ (see Whitehead's paper \cite{Whitehead}), all known constructions use the ``guess-and-check'' finite-field method of Comer, as follows: Fix $m\in \mathbb{Z}^+$, and let $X_0=H$ be a multiplicative subgroup of $\mathbb{F}_q$ of order $(q-1)/m$,  where $q \equiv 1\ \text{(mod $2m$)}$.  Let $X_1, \ldots X_{m-1}$ be its cosets; specifically, let $X_i = g^i X_0 = \{ g^{am+i}  : a \in \mathbb{Z}^+ \}$, where $g$ is a generator of $\mathbb{F}_q^\times$.  Suppose the following conditions obtain:
\begin{enumerate}
  \item[(i.)] $-X_i=X_i$;
   \item[(ii.)] $X_i+ X_i=\mathbb{F}_q\setminus X_i $,
  \item[(iii.)] for $i\neq j$, $X_i+ X_j=\mathbb{F}_q\setminus\{0\}$.
\end{enumerate}
Then define $A_i = \{(x,y)\in\mathbb{F}_q\times\mathbb{F}_q : x-y \in X_i \} $. It is easy to check that (i.)-(iii.) imply (I.)-(III.), and we get a Ramsey algebra.  Condition (ii.)  implies that all the $X_i$'s are sum-free. Consequently, the triangle-free Ramsey  number $R_m(3)$ is a bound on the size of prime powers  $q$ such that there might be an $m$-color Ramsey algebra over $\mathbb{F}_q$. Most of the attention has been given to prime fields $\mathbb{F}_p = \mathbb{Z}/p\mathbb{Z}$, and we now restrict our attention to these.

Comer was able to construct $m$-color Ramsey algebras for $m=1,2,3,4,5$  in 1983 \cite{comer83}. In 2011, Maddux produced constructions for $m=6,7$ using the same method as Comer but with a 2011 computer \cite{Mad11}.  Maddux failed to construct a Ramsey algebra for $m=8$.  In 2013, Manske and the author produced constructions over prime fields for all $m\leq 400$, with the exceptions of $m=8$ and $m=13$.  We were able to rule out  $m=8$ by checking all primes up through the Ramsey bound $R(3,3,3,3,3,3,3,3)$. Independently around that same time, Kowalski \cite{Kowalski} produced constructions  for all $m\leq 120$ except for $m=8$ and $m=13$, and found some constructions over non-prime fields. He also ruled out  $m=8$ over non-prime fields by checking all prime powers up through the Ramsey bound. The case of $m=13$ was left open.  In the present paper, we give constructions  for all $401\leq  m\leq 2000$ using prime fields and prove an upper bound on $p$ in terms of $m$ that is much better than the Ramsey bound, allowing us to rule out $m=13$ for prime fields.

 In Section \ref{sec:background}, we state some results we'll be assuming.  In Section \ref{sec:alg}, we give an   improvement  of the algorithm, using a recent insight. In Section \ref{sec:Fourier},  we prove bounds on $p$ in terms of $m$.  The method of proof of the upper bound comes from additive number theory. The first idea is that if a set is ``unstructured'' with respect to addition, then it should contain a solution to $x+y=z$, and hence not be sum-free. The second idea is that subsets of a field cannot be both additively structured and multiplicatively structured. Since $X_0$ is a multiplicative subgroup, it is highly structured, so it must be additively unstructured, i.e., its elements are ``randomly'' distributed. This is an example of a so-called sum-product phenomenon. See Green's survey \cite{Green}. Chung and Graham first studied quasirandom subsets of $\mathbb{Z}/n\mathbb{Z}$ in a 1992 paper \cite{ChungGraham92}. They showed that several different measures of quasirandomness were equivalent.  The measure that we will use in Section \ref{sec:Fourier} is that of having small nontrivial Fourier coefficients.  
 
 For more background on relation algebras, the reader is directed to the author's MS thesis\cite{AlmMS} or two authoritative texts \cite{ HH, Madd}.
 
 \section{Background from  Alm-Manske} \label{sec:background}
 
 In order to give a more complete background, we repeat some lemmas from the author's 2015 paper with Manske \cite{AlmManske2015}, condensed into one.  The following lemma shows that, while multiplicative subgroups may appear randomly distributed, they and their cosets have some quite well-behaved sumset properties.  

\begin{lemma} \label{lem1}
Let  $m \in \mathbb{Z}^{+}$ and let
 $p = mk + 1$ be a prime number with $k$ even, and let $g$ be a primitive root modulo $p$. 
 For $i \in \left\{0,1,\ldots,m-1\right\}$, define

\[ X_{i} = \left\{g^{i},g^{m + i},g^{2m + i},\ldots,g^{(k-1)m + i}\right\}.\]

 \begin{enumerate}

     \item   We have that $X_{0}$ is sum-free if and only if $1 \notin \left(X_{0} + X_{0}\right)$;

     \item 

If $X_{0} + X_{0} = (\zp)\setminus X_{0}$, then
$X_{i} + X_{i} = (\zp)\setminus X_{i}$
for all $i \in \left\{1,2,\ldots,m-1\right\}$.
 
     \item 

If $X_{0} + X_{i} = (\zp)\setminus\left\{0\right\}$ for all $i \in \left\{1,2,\ldots,m-1\right\}$, then
$\forall i \neq j$, $X_{i} + X_{j} = (\zp)\setminus\{0\}.$
\end{enumerate}
 
\end{lemma}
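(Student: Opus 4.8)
The plan is to leverage the single structural fact that each $X_i$ is the multiplicative coset $g^i X_0$ of the subgroup $X_0 = \{g^{am} : 0 \le a < k\}$ of $(\zp)^\times$. Two elementary observations will do most of the work. First, since $g^m \in X_0$ we have $g^m X_0 = X_0$, so the index $i$ in $X_i$ may be read modulo $m$ and $g^j X_0 = X_{j \bmod m}$. Second, for any $i$ the map $\mu_i \colon x \mapsto g^i x$ is a bijection of $\zp$ that fixes $0$ and permutes the units; hence for any $S \subseteq \zp$ we have $\mu_i(\zp \setminus S) = \zp \setminus \mu_i(S)$ and $\mu_i\big((\zp)\setminus\{0\}\big) = (\zp)\setminus\{0\}$. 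Because addition commutes with scaling, $\mu_i(A + B) = \mu_i(A) + \mu_i(B)$, and in particular $X_i + X_j = g^i\big(X_0 + g^{\,j-i}X_0\big) = g^i\big(X_0 + X_{(j-i)\bmod m}\big)$.

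For part 1, one direction is immediate: $1 = g^0 \in X_0$, so if $1 \in X_0 + X_0$ then $X_0$ meets $X_0 + X_0$ and is not sum-free. For the converse I would argue by contraposition. Suppose $X_0$ is not sum-free, so there are $x,y,z \in X_0$ with $x + y = z$. Since $X_0$ is a group under multiplication, $z^{-1} \in X_0$ and $xz^{-1}, yz^{-1} \in X_0$; multiplying the relation by $z^{-1}$ gives $xz^{-1} + yz^{-1} = 1$, whence $1 \in X_0 + X_0$. This normalization step is the one genuinely clever move in the proof.

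Parts 2 and 3 then follow by applying $\mu_i$ to the hypotheses. For part 2, $X_i + X_i = g^i(X_0 + X_0) = \mu_i(\zp \setminus X_0) = \zp \setminus \mu_i(X_0) = \zp \setminus X_i$, using the hypothesis $X_0 + X_0 = \zp \setminus X_0$ and the complementation property of $\mu_i$. For part 3, given distinct indices $i, j$, set $\ell = (j-i) \bmod m$, which is nonzero and lies in $\{1,\ldots,m-1\}$; then $X_i + X_j = g^i(X_0 + X_\ell) = \mu_i\big((\zp)\setminus\{0\}\big) = (\zp)\setminus\{0\}$, invoking the hypothesis $X_0 + X_\ell = (\zp)\setminus\{0\}$ and the fact that $\mu_i$ preserves the punctured field.

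I expect no serious obstacle: the only subtlety is bookkeeping the indices modulo $m$ in part 3 (checking $\ell \neq 0$ and $g^j X_0 = g^i X_\ell$), and recognizing in part 1 that the reverse implication needs the multiplicative normalization rather than a direct additive argument. The hypothesis that $k$ is even (equivalently $-1 \in X_0$, giving $-X_i = X_i$) is not needed for these three statements; it is recorded here because it supplies condition (i.) of the construction.
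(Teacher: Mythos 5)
Your proof is correct. The paper itself states this lemma without proof (it is quoted as background from Alm--Manske 2015), and your argument --- normalizing $x+y=z$ by $z^{-1}\in X_0$ for part 1, and pushing the hypotheses forward under the dilation $x\mapsto g^i x$ (which permutes $\zp$, fixes $0$, and sends $X_0$ to $X_i$) for parts 2 and 3 --- is precisely the standard coset-translation argument that underlies the cited source, including your correct side remark that the evenness of $k$ is not needed for these three statements.
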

Lemma \ref{lem1} tells us that the sumset structure of the $X_i$'s has ``rotational symmetry'', which reduces the number of things that must be checked. In particular, it suffices to consider only those set sums involving $X_0$.

\section{Improvement of the algorithm from Alm-Manske} \label{sec:alg}
The following lemma affords us a way to check, given $m$ and $p\equiv 1 \pmod{2m}$, whether the $m$ cosets of size $\frac{p-1}{m}$ form a Ramsey algebra.
\begin{lemma} \label{lem2}
Let  $m \in \mathbb{Z}^{+}$ and let
 $p = mk + 1$ be a prime number, $k$ even, and $g$ a primitive root modulo $p$. 
 For $i \in \left\{0,1,\ldots,m-1\right\}$, define

\[ X_{i} = \left\{g^{i},g^{m + i},g^{2m + i},\ldots,g^{(k-1)m + i}\right\}.\]

Then if $(X_0 + X_i) \cap X_j \neq \emptyset$, then $(X_0 + X_i) \supseteq X_j$. 
\end{lemma}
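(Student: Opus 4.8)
The plan is to exploit the fact that $X_0$ is a multiplicative subgroup and each $X_j$ is one of its cosets, reducing the claim to a statement about orbits. The single key observation I would establish first is that the sumset $X_0 + X_i$ is invariant under multiplication by any element of $X_0$. Indeed, for $h \in X_0$, distributing multiplication over the sum gives $h(X_0 + X_i) = h X_0 + h X_i$; since $X_0$ is a subgroup, $h X_0 = X_0$, and since $X_i = g^i X_0$ and $h \in X_0$, we have $h X_i = g^i (h X_0) = g^i X_0 = X_i$. Hence $h(X_0 + X_i) = X_0 + X_i$ for every $h \in X_0$.

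Next I would record that each coset $X_j$ is exactly one orbit of the multiplicative action of $X_0$ on $(\zp)^\times$: for any nonzero $x \in X_j$, writing $x = g^j h_0$ with $h_0 \in X_0$ gives $X_0 \cdot x = X_0 g^j h_0 = g^j X_0 = X_j$, using $X_0 h_0 = X_0$.

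With these two facts in hand, the lemma follows immediately. Assuming $(X_0 + X_i) \cap X_j \neq \emptyset$, I would pick an element $x$ in the intersection. Since $x \in X_0 + X_i$ and this set is fixed by multiplication by every $h \in X_0$, we get $hx \in X_0 + X_i$ for all $h \in X_0$, i.e.\ $X_0 \cdot x \subseteq X_0 + X_i$. But $x \in X_j$, so $X_0 \cdot x = X_j$, and therefore $X_j \subseteq X_0 + X_i$, which is the desired conclusion.

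I expect essentially no obstacle once the multiplicative invariance is noticed; that single observation is the whole content of the argument. The practical payoff, and the reason this sharpens the algorithm, is that $X_0 + X_i$ is thereby forced to be a union of cosets, so one need only test a single representative of each $X_j$ rather than every element. The one point requiring a moment's care is that the intersecting element $x$ is nonzero, so that its $X_0$-orbit is genuinely the full coset $X_j$; this is automatic since $0 \notin X_j$.
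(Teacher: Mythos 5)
Your proof is correct: the multiplicative invariance $h(X_0+X_i) = hX_0 + hX_i = X_0 + X_i$ for $h \in X_0$, combined with the fact that each $X_j$ is a single $X_0$-orbit, immediately yields the claim. The paper itself omits the proof entirely (remarking only that the lemma ``is very easy to prove''), and your argument is precisely the intended coset-translation idea that underlies the algorithmic improvement, so there is nothing to compare beyond noting that you have supplied the details the paper leaves out.
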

This lemma is very easy to prove and was apparently known to Comer, but it seems that no one previously saw how to use it to get an algorithmic improvement. The next corollary justifies the algorithm presented in the pseudocode below it. The algorithm is a special case of a more general one given in \cite{Alm}.
\begin{corollary}
Suppose $(X_0 - 1)\cap X_0 = \emptyset$, but for all $i,j$ not both zero, we have $(X_0 - g^j)\cap X_i \neq \emptyset$. Then the $X_i$'s form a Ramsey algebra.
\end{corollary}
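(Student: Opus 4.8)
The plan is to verify conditions (i.)--(iii.) for the $X_i$'s, since the introduction records that these imply (I.)--(III.) and hence produce a Ramsey algebra; the whole argument reduces to statements about $X_0$ alone via Lemma~\ref{lem1}, once the two hypotheses of the corollary are translated into sumset language. I would begin with condition (i.), which is free: since $p = mk+1$ with $k$ even, $-1 = g^{(p-1)/2} = g^{m(k/2)} \in X_0$, so multiplication by $-1$ fixes $X_0$ and therefore each coset, giving $-X_i = X_i$. This observation also yields $0 = x + (-x) \in X_0 + X_0$ for any $x \in X_0$, which I will need below.

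The heart of the argument is a pair of translations. Because $-X_i = X_i$, I would note that $(X_0 - g^j) \cap X_i \neq \emptyset$ holds if and only if $g^j \in X_0 + X_i$ (from $a - g^j = b$ one reads off $g^j = a + (-b)$, and conversely). Taking $j \in \{0, 1, \ldots, m-1\}$ so that $g^j \in X_j$, Lemma~\ref{lem2} then upgrades each such scalar fact to a full coset inclusion: $g^j \in (X_0 + X_i) \cap X_j$ forces $X_j \subseteq X_0 + X_i$. Dually, $(X_0 - 1) \cap X_0 = \emptyset$ is equivalent to $1 \notin X_0 + X_0$, which by Lemma~\ref{lem1}(1) is exactly sum-freeness of $X_0$, i.e.\ $(X_0 + X_0) \cap X_0 = \emptyset$.

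With these in hand the remaining conditions fall out. For (ii.), sum-freeness gives $(X_0 + X_0) \cap X_0 = \emptyset$, while the upgraded hypothesis with $i = 0$ and each $j \in \{1, \ldots, m-1\}$ gives $X_j \subseteq X_0 + X_0$; combined with $0 \in X_0 + X_0$ this yields $X_0 + X_0 = (\zp) \setminus X_0$, and Lemma~\ref{lem1}(2) propagates it to every $X_i$. For (iii.), I would fix $i \neq 0$ and observe first that $0 \notin X_0 + X_i$ (otherwise $-x = y$ for some $x \in X_0$, $y \in X_i$, forcing $y \in X_0$); then the upgraded hypothesis with this $i$ and every $j \in \{0, \ldots, m-1\}$ gives $X_j \subseteq X_0 + X_i$, so $X_0 + X_i = (\zp) \setminus \{0\}$, and Lemma~\ref{lem1}(3) supplies the general case $i \neq j$. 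Having verified (i.)--(iii.), I conclude the $X_i$'s form a Ramsey algebra.

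The main obstacle --- really the only nonroutine point --- is the passage from a single representative $g^j$ lying in $X_0 + X_i$ to the full inclusion $X_j \subseteq X_0 + X_i$; this is precisely what Lemma~\ref{lem2} buys us, and without it the finitely many scalar checks in the hypothesis would not pin down the sumsets. Everything else is membership bookkeeping together with the rotational symmetry already packaged in Lemma~\ref{lem1}.
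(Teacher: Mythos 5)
Your proof is correct and is exactly the argument the paper intends (the paper leaves the corollary unproved, presenting it as an immediate consequence of Lemmas~\ref{lem1} and~\ref{lem2}): you translate the two hypotheses into $1\notin X_0+X_0$ and $g^j\in X_0+X_i$ via $-X_i=X_i$, use Lemma~\ref{lem2} to upgrade each representative to a full coset inclusion, and then invoke Lemma~\ref{lem1} to propagate conditions (ii.) and (iii.) to all cosets. No gaps.
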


\begin{algorithm}[H]
 \KwData{A prime $p$, a divisor $m$ of $(p-1)/2$, a primitive root $g$ modulo $p$}
 \KwResult{ A Boolean, telling whether the corresponding coset structure is a Ramsey algebra}
 
 Compute $X_0 = \{g^{am} \pmod{p} : 0\leq a < (p-1)/m \}$;
 
 Compute $g^j - X_0 \pmod{p}$ for each $0\leq j < m$;
 
 \If{$(1 - X_0)\cap X_0 \neq \emptyset$}{return False} 
 
 \For{$i\leftarrow 1$ \KwTo $m-1$}{
    $X_i = \{g^{am+i} \pmod{p} : 0\leq a < (p-1)/m \}$
    
    \For{$j\leftarrow i$ \KwTo $m-1$}{
        \If{$(g^j - X_0)\cap X_i = \emptyset$}{
            return False
            }
    
    }
 }
 return True\\
 
\caption{Fast algorithm for checking for Ramsey algebras}
\end{algorithm}

This algorithm is significantly faster. For example, Kowalski's results  ($1\leq m \leq 120$, skipping 8 and 13) can be reproduced in 59 seconds. 

For each $m$ between 1 and 2000, we have found the smallest prime modulus over which Comer's construction yields an $m$-color Ramsey algebra. The data are available in sequence \seqnum{A263308}.

\begin{figure}[H]
\centering
 
\includegraphics[width=4.5in]{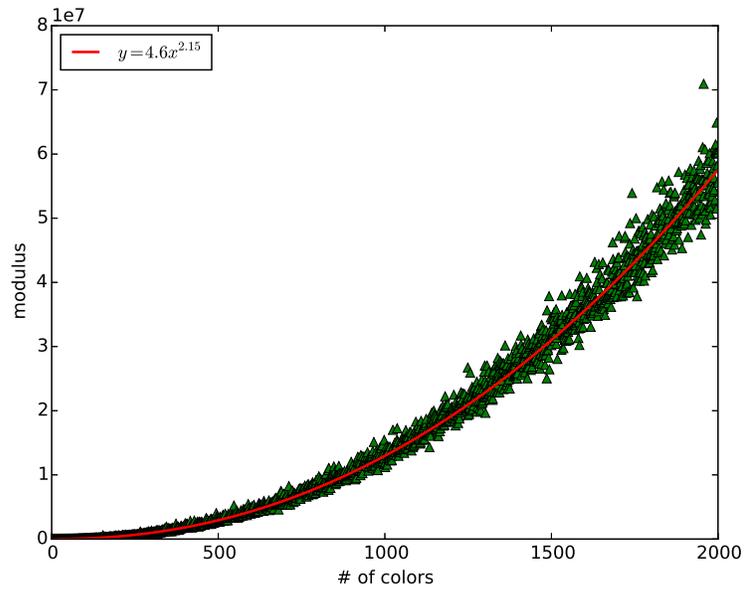}

\caption{Computational data, with trendline}
\label{fig:data}
\end{figure}

\begin{figure}[H]
\centering
 
\includegraphics[width=4.5in]{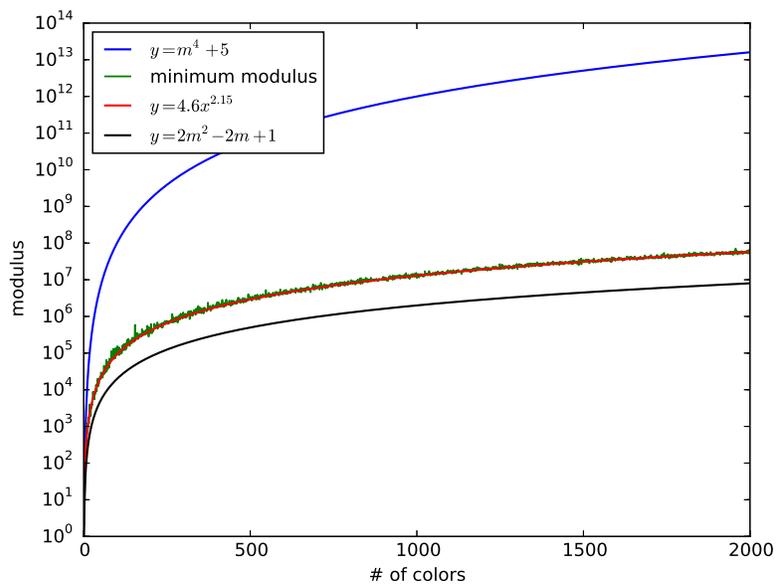}

\caption{Computational data, with bounds proven in Section \ref{sec:Fourier}}
\label{fig:bounds}
\end{figure}

\section{The Fourier transform, quasirandom sets, and a Ramsey-like bound} \label{sec:Fourier}

\begin{theorem} \label{thm:bounds}
Let the $m$-color multiplicative-coset Ramsey algebra be constructible over $\zp$.  Then for $m> 6$,
\[
2m^2-2m+1 \leq p \leq m^4 + 5
\]

\end{theorem}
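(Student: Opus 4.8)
The plan is to prove the two inequalities by quite different means. The lower bound is a short sumset-counting argument that uses only the sum-free condition (ii), whereas the upper bound is the ``sum-product'' heart of the matter and rests on bounding the Fourier coefficients of the multiplicative subgroup $X_0$.

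For the lower bound, write $k = (p-1)/m = |X_0|$. Condition (ii) gives $X_0 + X_0 = \mathbb{F}_p \setminus X_0$, so $|X_0 + X_0| = p - k$. On the other hand, every element of $X_0 + X_0$ is a sum $x+y$ with $x,y \in X_0$, and there are at most $\binom{k+1}{2} = \tfrac12 k(k+1)$ such unordered sums (multisets of size two from $k$ elements). Hence $p - k \le \tfrac12 k(k+1)$, which after substituting $p = mk+1$ becomes $k^2 + (3-2m)k - 2 \ge 0$. Since this quadratic is $-2$ at $k = 2m-3$ and equal to $2m-4 > 0$ at $k = 2m-2$, and $k$ is a positive integer, we must have $k \ge 2m-2$, giving $p = mk+1 \ge 2m^2 - 2m + 1$. (This step in fact works for all small $m$ as well; the hypothesis $m > 6$ will be needed only to pin down the constant in the upper bound.)

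For the upper bound I would argue contrapositively: if $p$ is too large, then $X_0$ cannot be sum-free, so no Ramsey algebra can exist. Let $\hat{1}_{X_0}(a) = \sum_{x \in X_0} e^{-2\pi i ax/p}$. Because $X_0$ is precisely the subgroup of $m$-th powers, its indicator decomposes over the multiplicative characters whose order divides $m$, and for $a \neq 0$ each nontrivial contribution is a Gauss sum of modulus $\sqrt p$; this yields
\[
|\hat{1}_{X_0}(a)| \le \frac{(m-1)\sqrt p + 1}{m} \le \sqrt p \qquad (a \neq 0).
\]
Counting additive triples $T = \#\{(x,y)\in X_0 \times X_0 : x+y \in X_0\}$ by Fourier inversion (using $-X_0 = X_0$, so the coefficients are real) produces a main term $k^3/p$ together with an error controlled by Parseval's identity $\sum_{a\neq 0}|\hat{1}_{X_0}(a)|^2 = pk - k^2$: the error is at most $\bigl(\max_{a\neq 0}|\hat{1}_{X_0}(a)|\bigr)k \le \sqrt p\,k$, so that $T \ge k^3/p - \sqrt p\,k$. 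Sum-freeness forces $T = 0$, hence $k^2 \le p^{3/2}$; substituting $k = (p-1)/m$ gives $(p-1)^2 \le m^2 p^{3/2}$. Solving this inequality for $p$, and checking that for $m > 6$ the lower-order terms push the boundary case below $p = m^4 + 5$, yields $p \le m^4 + 5$.

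The main obstacle is the Fourier estimate itself: identifying $X_0$ with the group of $m$-th powers, expanding $1_{X_0}$ in multiplicative characters, and invoking the modulus-$\sqrt p$ bound for the associated Gauss sums (a Weil-type input) is where all of the genuine content lives. Everything downstream is bookkeeping, and the only delicate point is confirming that the constant $+5$ truly absorbs the error terms for every $m > 6$; I would settle this by a direct monotonicity estimate on $(p-1)^2 - m^2 p^{3/2}$ in a neighborhood of $p = m^4$.
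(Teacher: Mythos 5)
Your proposal is correct and follows essentially the same route as the paper: a formal-sum count of $X_0+X_0$ for the lower bound (your cruder $\binom{k+1}{2}$ count still forces $k\ge 2m-2$, matching the paper's conclusion), and for the upper bound a Roth-style Fourier count of solutions to $x+y=z$ in $X_0$, with main term $k^3/p$ and an error controlled by Parseval together with the bound $|\widehat{\Ch}_{X_0}(a)|\le\sqrt{p}$, leading to the same inequality $(p-1)^4>m^4p^3$. The only substantive difference is the source of that key estimate: the paper cites Schoen--Shkredov for the $p^{-1/2}$-uniformity of multiplicative subgroups, whereas you derive it self-containedly from the multiplicative-character decomposition of $\Ch_{X_0}$ and the classical modulus-$\sqrt{p}$ Gauss sum bound, which is a perfectly valid (and arguably more elementary) substitute.
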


\begin{proof}

First we establish the lower bound by counting formal sums. Suppose $p<  2m^2-2m+1$. Since $(p-1)/2$ must be divisible by $m$, and $(p-1)/m$ must be even, $p\leq 2m^2-4m+1$.

We must have $X_{0} + X_{0} = (\mathbb{Z}/p\mathbb{Z})\backslash X_{0}$, so $|X_{0} + X_{0}| =
p-(p-1)/m$; however, counting formal sums we have
\begin{equation*}
  |X_{0} + X_{0}| \leq \binom{\left\lfloor\tfrac{p-1}{m}\right\rfloor}{2} + 
  \left\lfloor\tfrac{p-1}{m}\right\rfloor - \frac{1}{2}
  \left\lfloor\tfrac{p-1}{m}\right\rfloor + 1 
  = \frac{\left\lfloor\tfrac{p-1}{m}\right\rfloor^{2}}{2} + 1
\end{equation*}
where the binomial coefficient is the number of sums of two distinct
elements, $\left\lfloor\tfrac{p-1}{m}\right\rfloor$ counts the number of self-sums, $\frac{1}{2}
  \left\lfloor\tfrac{p-1}{m}\right\rfloor$ is a
lower bound on the number of these sums that result in $0$, and the $1$ adds
the identity back to the count.

Thus, it must be the case that
\begin{equation}
  \frac{\left\lfloor\tfrac{p-1}{m}\right\rfloor^{2}}{2} + 1 \geq p-(p-1)/m.
  \label{eq:formal-sum-inequality}
\end{equation}
Then one may check that if $p = 2m^2-4m+1$, \eqref{eq:formal-sum-inequality} fails to hold. Certainly, then, no smaller modulus will suffice.

We now turn our attention to the upper bound. It will suffice to show that for $p > m^4+5$, $X_0$ is not sum-free.  We take as our starting point the ideas of Roth, who first used Fourier-analytic techniques to count the number of solutions to a linear equation inside a set \cite{Roth}. Fourier analysis  in additive number theory has become a subfield in its own right since the seminal work of Gowers \cite{Gowers4, Gowers}, now sometimes called quadratic Fourier analysis.  We need only the ``linear'' Fourier analysis of Roth. We follow the development in Lyall's notes \cite{Lyall}. 

 Suppose we want to count the solutions to the equation $x+y=z$ inside a set $A\subseteq \zp$ with $|A|=\delta p$.  Let $\mathcal{N}$ be the number of solutions inside $A$.  We have that 
 \begin{equation} \label{eq1}
  \frac{1}{p}\sum^{p-1}_{k=0} e^{\frac{-2\pi ik}{p}x}= 
     \begin{cases}
     1, & \text{ if } x\equiv 0 \pmod p ;\\
     0, &\text{ if } x\not\equiv 0 \pmod p.
 \end{cases}
 \end{equation}

 Because of \eqref{eq1}, we have
 
 \begin{equation}\label{eq2}
 \mathcal{N}=\sum_{x\in A}\sum_{y\in A}\sum_{z\in A}\frac{1}{p}\sum_{k=0}^{p-1} e^{\frac{-2\pi ik}{p}(x+y-z)}
 \end{equation}
 
 Rearranging \eqref{eq2}, we get

 \begin{align*}
 &\phantom{=} \ \ \frac{1}{p}\sum_{k=0}^{p-1}\sum_{x\in A}\sum_{y\in A}\sum_{z\in A} e^{\frac{-2\pi ik}{p}x}\cdot e^{\frac{-2\pi ik}{p}y}\cdot e^{\frac{2\pi ik}{p}z}\\
 &= \frac{1}{p}\sum_{k=0}^{p-1}\left[  \sum_{x\in A}e^{\frac{-2\pi ik}{p}x} \ \cdot \ \sum_{y\in A}e^{\frac{-2\pi ik}{p}y} \ \cdot \  \sum_{z\in A}e^{\frac{2\pi ik}{p}z}\right]\\
 &= \frac{1}{p}\sum_{k=0}^{p-1} \left[\sum_{x\in\zp} \Ch_A(x) e^{\frac{-2\pi ik}{p}x} \ \cdot \  \sum_{y\in\zp} \Ch_A(y) e^{\frac{-2\pi ik}{p}y} \ \cdot \ \sum_{z\in\zp} \Ch_A(-z) e^{\frac{2\pi ik}{p}z}\right]\\
  &= \frac{1}{p}\sum_{k=0}^{p-1} \widehat{\Ch}_A(k)^2\cdot\widehat{\Ch}_A(-k), \numberthis\label{eq3}\\
 \end{align*}
 where $\Ch_A$ denotes the characteristic function of $A$, and $\widehat{f}$ denotes the Fourier transform of $f$, 
 \[
   \widehat{f}(x) = \sum^{p-1}_{k=0}f(k)  e^{\frac{-2\pi ik}{p}x} .
 \]
 Now we can pull out the $k=0$ term from \eqref{eq3}:
 
 \begin{align*}
 \eqref{eq3} &= \frac{1}{p}\widehat{\Ch}(0)^3+\frac{1}{p}\sum_{k=1}^{p-1}\widehat{\Ch}_A(k)^2\cdot\widehat{\Ch}_A(-k) \\
 &= \frac{|A|^3}{p} + \frac{1}{p}\sum_{k=1}^{p-1}\widehat{\Ch}_A(k)^2\cdot\widehat{\Ch}_A(-k)\\
 &=\delta^3p^2 + \frac{1}{p}\sum_{k=1}^{p-1}\widehat{\Ch}_A(k)^2\cdot\widehat{\Ch}_A(-k).
 \end{align*}
 
 If we selected elements from $\zp$ at random and placed them in $A$, then we'd expect $\delta^3p^2$ solutions to $x+y-z=0$ in $A$.  In light of this, we'll call $\delta^3p^2$ the \emph{main term}, and $\frac{1}{p}\sum_{k=1}^{p-1}\widehat{\Ch}_A(k)^2\cdot\widehat{\Ch}_A(-k)$ the \emph{error term}.  The error term will measure how close (or far) $A$ is from being a ``random" set.  We now bound this error term.
 
 Suppose $0<\alpha <1$ and $|\widehat{\Ch}_A(k)|\leq\alpha p$ for all $0 \neq k\in\zp$. In this case, we say that $A$ is $\alpha$-\emph{uniform}. Then
 \begin{align*}
 \left|\frac{1}{p}\sum_{k=1}^{p-1}\widehat{\Ch}_A(k)^2\cdot\widehat{\Ch}_A(-k)\right| &\leq \frac{1}{p}\max |\widehat{\Ch}_A(k)|\cdot  \left|\sum_{k=1}^{p-1}\widehat{\Ch}_A(k)^2\right|\\
 &\leq \alpha  \left|\sum_{k=1}^{p-1}\widehat{\Ch}_A(k)^2\right|\\
 &\leq \alpha p\left|\sum_{k=1}^{p-1}\Ch_A(k)^2\right|\\
 &\leq \alpha\delta p^2,
 \end{align*}
 where the second-to-last line is by Parseval's  identity.
 
 Hence $\mathcal{N}\geq\delta^3 p^2-\alpha\delta p^2$.  So we want $\alpha<\delta^2$. By Schoen and Shkredov \cite[Corollary 2.5]{Schoen}, if $H$ is a multiplicative subgroup of $\zp$, then $H$ is $\alpha$-uniform for $\alpha = p^{-1/2}$.  Now $\delta = \frac{p-1}{mp}$, so $\alpha<\delta^2$ is equivalent to $(p-1)^4 > m^4p^3$, which in turn  is equivalent to $p>m^4 + 5$ for integers $p>6$.   Therefore $X_0$ is $\delta^2$-uniform, so it contains a solution to $x+y=z$ and hence is not sum-free.
\end{proof}
Note that  the upper bound given in Theorem \ref{thm:bounds} is significantly less than what one gets by using the Ramsey number $R_m(3)$, which is at least exponential in $m$. 

\begin{corollary}
There is no 13-color multiplicative-coset Ramsey algebra constructible over $\zp$ for any prime $p$. Hence \seqnum{A263308}$(13) = 0$.
\end{corollary}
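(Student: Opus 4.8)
The plan is to reduce the claim to a finite computation using Theorem~\ref{thm:bounds} and then dispatch that computation with the algorithm of Section~\ref{sec:alg}. First I would specialize the bounds to $m=13$, which is permissible since $13>6$. The lower bound gives $p \geq 2\cdot 13^2 - 2\cdot 13 + 1 = 313$, and the upper bound gives $p \leq 13^4 + 5 = 28566$. Thus any prime $p$ admitting a 13-color multiplicative-coset Ramsey algebra must satisfy $313 \leq p \leq 28566$, which replaces an a priori infinite search with a finite one. This is precisely the point of the theorem: the Ramsey-number bound $R_{13}(3)$ is astronomically larger, so the Fourier-analytic upper bound is what makes the computation feasible.

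Next I would impose the divisibility constraints inherent in the construction. Since $p = 13k+1$ with $k$ even, we need $p \equiv 1 \pmod{26}$. This restricts the candidates to the primes in $[313,\,28566]$ that are congruent to $1$ modulo $26$, a list of only a few hundred primes. For each surviving candidate $p$ I would then run the algorithm of Section~\ref{sec:alg} with $m=13$ to decide whether the $13$ cosets of size $(p-1)/13$ form a Ramsey algebra. By Lemma~\ref{lem1} it suffices to verify the sumset conditions involving $X_0$, so each individual check is fast, and by the Corollary following Lemma~\ref{lem2} the algorithm correctly returns whether conditions (i.)--(iii.) hold.

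Reporting that every candidate prime causes the algorithm to return \textbf{False} would then establish that no prime yields a 13-color algebra, whence \seqnum{A263308}$(13)=0$. The main obstacle here is not mathematical but evidentiary: the entire force of the corollary rests on the correctness of the upper bound of Theorem~\ref{thm:bounds} (so that one may legitimately stop checking at $28566$) together with faithful implementation of the algorithm. Given the speedup documented in Section~\ref{sec:alg}---Kowalski's full range reproduced in under a minute---the finite sweep over a few hundred primes is entirely routine, and I would regard the real content of the corollary as already discharged by the proof of the upper bound.
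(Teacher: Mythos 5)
Your proposal is correct and follows essentially the same route as the paper: specialize Theorem~\ref{thm:bounds} to $m=13$ to obtain the finite range $p \leq 28566$, then verify computationally (via the algorithm of Section~\ref{sec:alg}) that no candidate prime works. The extra details you supply---the lower bound, the congruence $p \equiv 1 \pmod{26}$, and the appeal to Lemma~\ref{lem1}---are implicit in the paper's one-line verification and do not change the argument.
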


\begin{proof}
Let $m=13$. Then by Theorem \ref{thm:bounds}, $p< 28567$. We have verified that no such prime yields a 13-color multiplicative-coset Ramsey algebra. 
\end{proof}

Note that using the upper bound on $R_{13}(3)$ from Greenwood and Gleason \cite{greenwoodgleason} would have  required checking primes up through $1.69\cdot 10^{10}$.

In Figure \ref{fig:FFT} below, one can see the normalized maximum modulus of the nontrivial Fourier coefficients of the characteristic function of $X_0$ over candidate primes $p$ for $m=13$.  As $p$ (and hence $|X_0|$) grows, this maximum modulus shrinks relative to $p$. Hence  $X_0$ is more and more ``random-looking''.  The red horizontal line indicates the threshold for our method to guarantee that  $X_0$ is not sum-free.

\begin{figure}[H]
\centering
 \subfigure[]{
\includegraphics[width=3in]{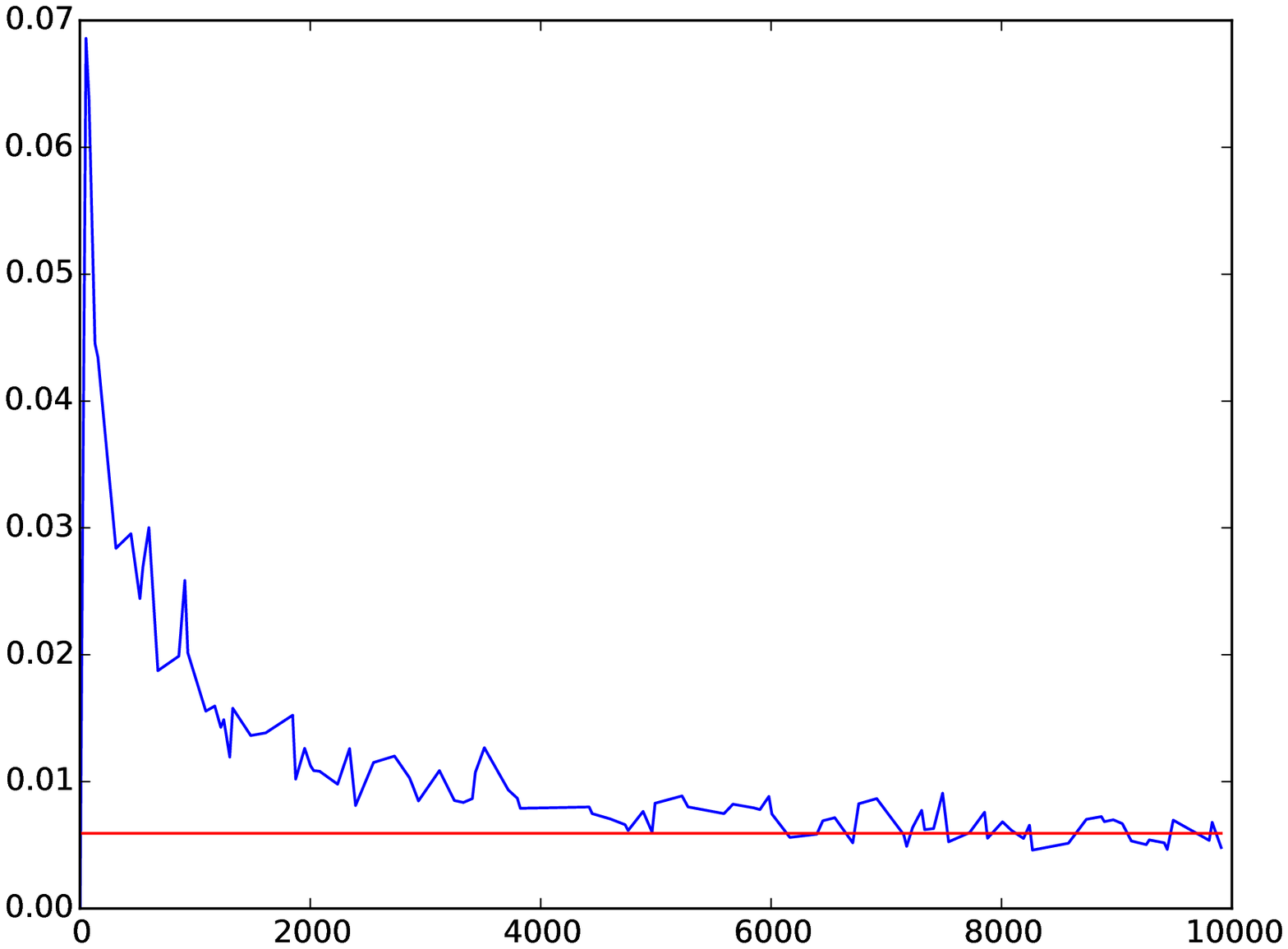}
}
 \subfigure[]{
\includegraphics[width=3in]{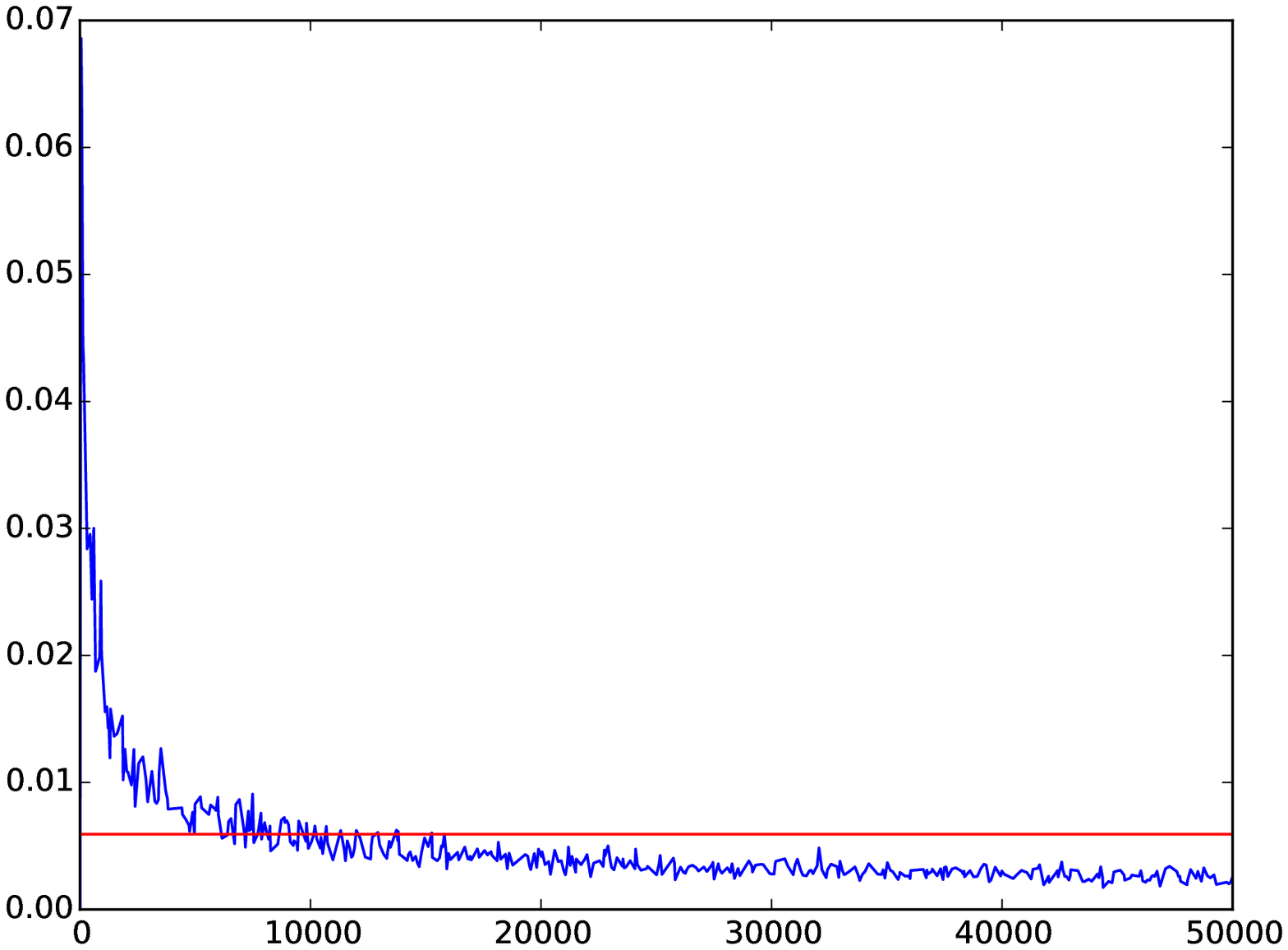}
}
 
\caption{Normalized maximum modulus of nontrivial Fourier coefficients of  $\Ch(X_0)$. The red line is $y=1/\delta^2=1/169$. One can see that as $p$ increases, the $X_0$'s become increasingly uniform.}
\label{fig:FFT}
\end{figure}

\section{Further directions}

While there has been significant computational progress on this problem in the last few years, computation will never get us a proof that Ramsey algebras are constructible for all sufficiently large $m$. We hope that the ideas in the proof of Theorem \ref{thm:bounds} are a significant step in this direction. We now collect some open problems whose resolution  would contribute to such a proof.

\begin{problem}
Prove estimates on the number of primes $p\equiv 1 \pmod{2m}$ between $2m^2$ and $m^3$.
\end{problem}

\begin{problem} 
For certain primes $p$ significantly smaller than $m^4$, $X_0$ is not sum-free. Find  conditions on $p$ and $m$ that suffice for $X_0$ to be sum-free. 
\end{problem}

\begin{problem} 
Improve the Ramsey-like upper bound in Theorem  \ref{thm:bounds}. For example, it would seem reasonable to think that one could do better than $p^{-1/2}$-uniformity, which holds for \emph{all} subgroups, by taking into account that the $X_0$'s are relatively  large.   
\end{problem}

\section{Acknowledgments}

I wish to thank Jacob Manske and David Andrews for many useful conversations; Andy Ylvisaker, who made an important observation concerning the algorithm; Illinois College  trustee Del Dunham, whose generous donation allowed me to purchase my new compute-server; and Keenan Mack, whose collaboration and friendship kept me sane this past academic  year.

\bigskip
\hrule
\bigskip

\noindent 2010 {\it Mathematics Subject Classification}: Primary 11B13, Secondary 11A07, 03G15, 11-04, 11U10, 11Y55.

\noindent \emph{Keywords: Ramsey algebra, relation algebra, finite field, sum-free set, sumset} 

\bigskip
\hrule
\bigskip

\noindent
(Concerned with sequence
\seqnum{A263308}.)

\bigskip
\hrule
\bigskip

\vspace*{+.1in}
\noindent
Received ;
revised .
Published in {\it Journal of Integer Sequences}, .

\bigskip
\hrule
\bigskip

\noindent
Return to
\htmladdnormallink{Journal of Integer Sequences home page}{http://www.cs.uwaterloo.ca/journals/JIS/}.
\vskip .1in

\end{document}